\theoremstyle{plain}
\newtheorem{theorem}{Theorem}
\newtheorem{definition}[theorem]{Definition}
\newtheorem{question}{Question}
\newtheorem{lemma}[theorem]{Lemma}
\newtheorem{corollary}[theorem]{Corollary}
\theoremstyle{remark}
\newtheorem{remark}{Remark}
\numberwithin{equation}{section}
\newcommand{\eps}{\varepsilon}
\newcommand{\N}{\mathbb{N}}
\newcommand{\Z}{\mathbb{Z}}
\newcommand{\R}{\mathbb{R}}
\newcommand{\Q}{\mathbb{Q}}
\newcommand{\diam}{\operatorname{diam}}
\DeclarePairedDelimiter\abs{\lvert}{\rvert}
\begin{document}

\title[Translation invariant measure]{Impossibility of decoding a translation invariant measure from a single set of positive Lebesgue measure}

\author[A. Bulj]{Aleksandar Bulj}

\address{Department of Mathematics, Faculty of Science, University of Zagreb, Bijeni\v{c}ka cesta 30, 10000 Zagreb, Croatia}
\email{aleksandar.bulj@math.hr}

\subjclass[2020]{Primary 28A12; 
} 

\keywords{Translation invariant measure}

\begin{abstract}
    Let $\mu$ be a translation invariant measure on $(\mathbb{R}^d,\mathcal{B}(\mathbb{R}^d))$ and let $\lambda$ denote the Lebesgue measure on $\mathbb{R}^d$. If there exists an open set $U$ such that $0<\mu(U)=\lambda(U)<\infty$, it is a simple exercise to show that $\mu=\lambda|_{\mathcal{B}(\mathbb{R}^d)}$. Is the same conclusion true if $U$ is merely a Borel set?

    The main purpose of this short note is to construct a measure that provides a negative answer to this question. 
    Incidentally, this construction provides a new example of a translation invariant measure with a rich domain and range that is not Hausdorff, a problem previously studied by Hirst.

\end{abstract}

\maketitle


\section{Introduction}
Let $\mathcal{B}(\R^d)$ be the Borel $\sigma$-algebra on $\R^d$ and let $\lambda$ be the Lebesgue measure on $\R^d$. Let $\mu$ be another measure defined on $\mathcal{B}(\R^d)$ that is translation invariant. 
It is a simple exercise to check that if there exists a set $U\subset \R^d$ that is either a box (Cartesian product of $d$ intervals) or an open set, such that $0<\mu(U)=\lambda(U)<\infty$, then $\mu=\lambda$ (see Lemma \ref{lem:simple_equality}).
A natural generalization is the following question.

\begin{question}
\label{qu:main}
    Let $\mu$ be a translation invariant measure on $(\R^d,\mathcal{B}(\R^d))$. If there exists a set $U\in \mathcal{B}(\R^d)$ such that $0<\mu(U)=\lambda(U)<\infty$, does it imply that $\mu=\lambda$?
\end{question} 

The main purpose of this note is to present a construction of a measure that provides a negative answer to the given question, but which might be of independent interest, especially in light of the book \cite{SchillingBook}. 

\begin{theorem}
\label{thm:main}
    There exists a translation invariant measure $\mu$ on $(\R^d,\mathcal{B}(\R^d))$ that is not equal to the Lebesgue measure $\lambda$, but for which there exists a compact set $K\in \mathcal{B}(\R^d)$ such that $0<\mu(K)=\lambda(K)<\infty$. Specifically, this measure satisfies $\mu([0,1]^d)=+\infty$.
\end{theorem}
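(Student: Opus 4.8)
The plan is to obtain $\mu$ via the standard construction of a measure from an outer measure \cite{FollandBook}, the only nonstandard ingredient being the domain of the pre-measure. Fix a \emph{fat Cantor set} $C_0\subseteq[0,1]$---a compact set with empty interior and positive one-dimensional Lebesgue measure---and set $K:=C_0\times\dots\times C_0\subseteq[0,1]^d$. Then $K$ is compact, nowhere dense (being closed with empty interior), and $0<\lambda(K)<\infty$. Let $\mathcal{M}$ be the $\sigma$-ideal of all \emph{meager Borel} subsets of $\R^d$, and define the pre-measure $\rho$ on $\mathcal{M}\cup\{\R^d\}$ by $\rho(E):=\lambda(E)$ for $E\in\mathcal{M}$ and $\rho(\R^d):=+\infty$. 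Let $\mu^*(A):=\inf\bigl\{\sum_{j}\rho(E_j): A\subseteq\bigcup_j E_j,\ E_j\in\mathcal{M}\cup\{\R^d\}\bigr\}$ be the associated outer measure.

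The key technical step is to verify that $\mu^*$ is a \emph{metric} outer measure. Suppose $\operatorname{dist}(A,B)=\delta>0$; we may assume $\mu^*(A\cup B)<\infty$ and choose a cover $\{E_j\}$ of $A\cup B$ with $\sum_j\rho(E_j)<\infty$, so that every $E_j\in\mathcal{M}$. The open $\tfrac{\delta}{2}$-neighbourhoods $U_A$ of $A$ and $U_B$ of $B$ are disjoint Borel sets, so $E_j\cap U_A$ and $E_j\cap U_B$ are disjoint Borel subsets of the meager set $E_j$, hence again members of $\mathcal{M}$; moreover $\{E_j\cap U_A\}_j$ covers $A$, $\{E_j\cap U_B\}_j$ covers $B$, and $\lambda(E_j\cap U_A)+\lambda(E_j\cap U_B)\le\lambda(E_j)$. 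Summing over $j$ gives $\mu^*(A)+\mu^*(B)\le\sum_j\rho(E_j)$, whence $\mu^*(A)+\mu^*(B)\le\mu^*(A\cup B)$; together with countable subadditivity this makes $\mu^*$ a metric outer measure, so by Carath\'eodory's theorem every Borel set is $\mu^*$-measurable. Thus $\mu:=\mu^*|_{\mathcal{B}(\R^d)}$ is a Borel measure, translation invariant because $\mathcal{M}$ and $\rho$ are.

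It then remains to identify $\mu$ and read off the conclusions. For a meager Borel set $B$, the one-element cover $\{B\}$ gives $\mu(B)\le\lambda(B)$, while monotonicity and countable subadditivity of $\lambda$ give $\mu(B)\ge\lambda(B)$; hence $\mu(B)=\lambda(B)$. For a non-meager Borel set $B$, every admissible cover must use the element $\R^d$---otherwise $B$ would be contained in the countable union $\bigcup_j E_j$ of meager sets and hence be meager---so $\mu(B)=+\infty$. In particular $\mu(K)=\lambda(K)\in(0,\infty)$ because $K$ is nowhere dense, whereas $\mu([0,1]^d)=+\infty$ because $[0,1]^d$ is non-meager by the Baire category theorem; in particular $\mu\ne\lambda$.

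I expect the main obstacle to be identifying the correct notion of ``smallness'' for the pre-measure domain. The naive candidate---declare a set small if it is covered, up to a $\lambda$-null set, by countably many translates of a fixed compact nowhere-dense set $C$---fails, since $\bigcup_{q\in\Q^d}(C+q)$ is $\Q^d$-invariant and of positive Lebesgue measure, hence has $\lambda$-null complement, so $[0,1]^d$ would be declared small. The meager $\sigma$-ideal is the right replacement: it is translation invariant and closed under countable unions, so a countable union of translates of a nowhere-dense set remains meager and can never contain a box, yet $\mathcal{M}$ still contains plenty of compact sets of positive Lebesgue measure. Once $\mathcal{M}$ is fixed, the rest is routine: either carry out the metric-outer-measure check above, or bypass outer measures entirely and define $\mu$ directly by $\mu(B)=\lambda(B)$ for $B\in\mathcal{M}$ and $\mu(B)=+\infty$ otherwise, verifying countable additivity by hand---which reduces to countable additivity of $\lambda$ on $\mathcal{M}$ together with the $\sigma$-ideal axioms.
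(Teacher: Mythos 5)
Your construction is correct, but it is genuinely different from the paper's. The paper fixes one fat Cantor set $C$, takes the ring $\mathcal{R}$ generated by $\{(C+x)\cap I\}$, puts the pre-measure $\lambda|_{\mathcal{R}}$ on it, and runs the Carath\'eodory extension; Borel measurability then has to be earned by hand (closure of $\mathcal{R}$ under intersection with boxes, Lemma \ref{lem:closed}, plus a direct check that half-spaces split every cover), and $\mu([0,1]^d)=+\infty$ comes from the fact that all members of $\mathcal{R}$ are nowhere dense together with Baire (Lemma \ref{lem:cover}). You instead take the entire $\sigma$-ideal $\mathcal{M}$ of meager Borel sets as the class of ``small'' sets, and your measure admits the completely explicit description $\mu(B)=\lambda(B)$ for meager Borel $B$ and $\mu(B)=+\infty$ otherwise; as you note at the end, once this formula is written down the outer-measure detour is optional, since countable additivity follows directly from additivity of $\lambda$ plus the $\sigma$-ideal axioms (if a disjoint union is non-meager, some piece is non-meager), and translation invariance, $\mu(K)=\lambda(K)\in(0,\infty)$, and $\mu([0,1]^d)=+\infty$ (Baire) are immediate. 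Your metric-outer-measure verification is also sound: finite-cost covers consist of meager sets, intersecting them with the disjoint $\delta/2$-neighbourhoods keeps them in $\mathcal{M}$, and the standard Carath\'eodory criterion then gives Borel measurability for free, which is exactly the step the paper has to argue concretely (and which, per Remark \ref{rem}, forces the paper's somewhat delicate choice of generating family). The trade-off: your route is shorter and more transparent, and your measure dominates the paper's in the sense of being finite on every meager Borel set rather than only on sets coverable by translates of one fixed $C$; the paper's route stays within the pre-measure-on-a-ring extension framework of Folland, which is the narrative it wants, and produces a possibly larger $\sigma$-algebra of measurable sets than $\mathcal{B}(\R^d)$. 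Your closing diagnosis of why the ``covered up to a null set by translates of $C$'' idea fails (ergodicity of rational translations makes $[0,1]^d$ small) is also correct and is a good complement to the paper's Remark \ref{rem}.
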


The construction follows the standard construction of a measure from an outer measure presented, for example, in \cite{FollandBook}, with the only novelty being the intricate definition of the domain of the pre-measure; see Remark \ref{rem} for more details.

Related to the question of our study, Hirst \cite{Hirst66} studied translation invariant measures that are not Hausdorff. 
He mentions a few simple examples of measures that are translation invariant and not Hausdorff: ``Translation invariant measures which are not Hausdorff measures can be specified
quite simply; we can have measures taking only the values zero and infinity, or
measures taking only a finite number of values, but having very few measurable sets." The main result of \cite{Hirst66} is a construction of a regular translation invariant metric outer measure that is not Hausdorff.
Incidentally, the measure in Theorem \ref{thm:main} is another example of a translation invariant measure that has rich domain, i.e. $\mathcal{B}(\R^d)$ and rich range, i.e. $[0,\infty]$. More precisely, the following corollary will be proved as a simple consequence of Theorem \ref{thm:main}.

\begin{corollary}
\label{cor:Hausd}
    The measure that satisfies Theorem \ref{thm:main} is a translation invariant measure on $(\R^d,\mathcal{B}(\R^d))$ that is not a Hausdorff measure and whose range is equal to $[0,\infty]$.
\end{corollary}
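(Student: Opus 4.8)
\emph{Setup.} The plan is to derive both assertions of the corollary directly from the conclusion of Theorem~\ref{thm:main}. Write $K$ for the compact Borel set furnished there, so that $0<\mu(K)=\lambda(K)<\infty$, and recall that $\mu([0,1]^d)=+\infty$ while $\mu$ is translation invariant on $\mathcal B(\R^d)$.

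\emph{The range is $[0,\infty]$.} The first step is to observe that $\mu$ has no point masses: by translation invariance $\mu(\{x\})=\mu(\{0\})=:a$ for every $x\in\R^d$, and since $\lambda(K)>0$ the set $K$ is infinite, so if $a>0$ we would get $\mu(K)\ge\sum_n\mu(\{x_n\})=+\infty$ for a sequence of distinct points $x_n\in K$, contradicting $\mu(K)<\infty$; hence $a=0$. A routine covering argument upgrades this to: the finite Borel measure $\mu|_K$ is non-atomic (if $E\subseteq K$ had $\mu(E)>0$, use $\mu(B(x,r)\cap E)\to\mu(\{x\}\cap E)=0$ as $r\downarrow 0$ together with the Lindel\"of property to write $E$ as a countable union of Borel sets of $\mu$-measure $<\mu(E)$, one of which must still have positive measure). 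Consequently, by the classical intermediate value theorem for non-atomic measures (Sierpi\'nski), $\{\mu(A):A\subseteq K\text{ Borel}\}=[0,\mu(K)]$. To reach every value beyond $\mu(K)$, fix $v\in\R^d$ with $\abs{v}$ so large that the translates $K_j:=K+jv$, $j\in\N$, are pairwise disjoint (possible since $K$ is bounded); then $\mu(K_1\cup\dots\cup K_n)=n\,\mu(K)$ by translation invariance, and for any $c'\in[0,\mu(K)]$ adjoining to $K_1\cup\dots\cup K_{n-1}$ a Borel subset of $K_n$ of $\mu$-measure $c'$ (which exists, $\mu|_{K_n}$ being a translate of $\mu|_K$ and hence non-atomic) produces a Borel set of $\mu$-measure $(n-1)\mu(K)+c'$. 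Letting $n$ vary, $\mu$ attains every value in $\bigcup_{n\ge 1}[(n-1)\mu(K),\,n\,\mu(K)]=[0,\infty)$, and together with $\mu(\emptyset)=0$ and $\mu([0,1]^d)=+\infty$ this shows the range of $\mu$ equals $[0,\infty]$.

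\emph{$\mu$ is not a Hausdorff measure.} Suppose towards a contradiction that $\mu=\mathcal H^h$ on $\mathcal B(\R^d)$ for some gauge function $h\colon[0,\infty)\to[0,\infty]$, where $\mathcal H^h_\delta(E)=\inf\bigl\{\sum_i h(\diam U_i):E\subseteq\bigcup_i U_i,\ \diam U_i\le\delta\bigr\}$ and $\mathcal H^h(E)=\lim_{\delta\to 0^+}\mathcal H^h_\delta(E)=\sup_{\delta>0}\mathcal H^h_\delta(E)$. I would split into two cases according to $\ell:=\liminf_{t\to 0^+}h(t)/t^d$. If $\ell<\infty$, pick $t_n\downarrow 0$ with $h(t_n)\le(\ell+1)t_n^d$; covering $[0,1]^d$ by at most $C_d\,t_n^{-d}$ cubes of diameter $t_n$ gives $\mathcal H^h_{t_n}([0,1]^d)\le C_d(\ell+1)$, and since $\delta\mapsto\mathcal H^h_\delta$ is non-increasing this forces $\mathcal H^h([0,1]^d)\le C_d(\ell+1)<\infty$, contradicting $\mu([0,1]^d)=+\infty$. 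If $\ell=+\infty$, then for every $M>0$ one has $h(t)\ge M t^d$ for all sufficiently small $t>0$, so for $\delta$ small enough every admissible cover $\{U_i\}$ of $K$ satisfies $\sum_i h(\diam U_i)\ge M\sum_i(\diam U_i)^d\ge M\sum_i\lambda(U_i)\ge M\,\lambda(K)$, using that a set of diameter $r$ in $\R^d$ is contained in a cube of side $r$; hence $\mathcal H^h(K)\ge M\,\lambda(K)$ for every $M$, so $\mathcal H^h(K)=+\infty$, contradicting $\mu(K)=\lambda(K)<\infty$. In either case we reach a contradiction, so $\mu$ is not a Hausdorff measure, which together with the preceding paragraph proves Corollary~\ref{cor:Hausd}.

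\emph{Expected difficulties.} The non-Hausdorff part is a short dichotomy once the elementary bound $\lambda(E)\le(\diam E)^d$ is in hand. The steps requiring the most care lie in the range argument: upgrading ``$\mu$ has no point masses'' to ``$\mu|_K$ is genuinely non-atomic'' (so that Sierpi\'nski's theorem applies), and making sure the range fills out all of $(0,\infty)$ rather than merely $(0,\mu(K)]$ — which is exactly the purpose of the disjoint-translates device.
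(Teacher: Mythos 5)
Your proposal is correct, but both halves take a genuinely different route from the paper. For the non-Hausdorff part, the paper never touches the gauge function: it invokes the regularity of Hausdorff measures to produce a bounded open set $U\supset K$ with $0<\mu(U)<\infty$, applies Lemma~\ref{lem:simple_equality} to the rescaled measure $(\lambda(U)/\mu(U))\,\mu$, and concludes $\mu=\lambda$, contradicting $\mu([0,1]^d)=+\infty$; this is shorter, rests on the cited regularity theorem, and in fact shows that any Hausdorff measure with $0<\mu(K)=\lambda(K)<\infty$ must already equal $\lambda$. Your dichotomy on $\ell=\liminf_{t\to 0^+}h(t)/t^d$ is elementary and self-contained (for $\ell<\infty$ a cube-covering bound gives $\mathcal{H}^h([0,1]^d)<\infty$, for $\ell=\infty$ the inequality $\lambda(E)\le(\diam E)^d$ gives $\mathcal{H}^h(K)\ge M\lambda(K)$ for every $M$), at the cost of using both facts $\mu(K)=\lambda(K)\in(0,\infty)$ and $\mu([0,1]^d)=+\infty$ rather than a general uniqueness principle; the minor mismatch between your covering convention (arbitrary sets, $\diam\le\delta$) and the paper's (open sets, $\diam<\delta$) is harmless here. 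For the range, the paper takes $S=\bigcup_{n}(C+n\cdot a)$, a disjoint union of translates of the fat Cantor set with $\mu(S)=\infty$, and applies the intermediate value theorem to the continuous function $x\mapsto\mu(S\cap(-\infty,x])$, whose continuity tacitly uses that hyperplane sections of translates of $C$ lie in $\mathcal{R}$ and hence have $\mu$-measure equal to their Lebesgue measure, namely zero; you instead show $\mu|_K$ is non-atomic (vanishing point masses plus a Lindel\"of covering argument), quote Sierpi\'nski's intermediate-value theorem for non-atomic finite measures, and use pairwise disjoint translates of $K$ to fill out $[0,\infty)$. Both arguments are sound: yours works purely from the statement of Theorem~\ref{thm:main}, which is a cleaner logical dependence and makes the non-atomicity explicit, while the paper's exploits the internal structure of the construction and is correspondingly shorter.
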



\subsection{Preliminaries}
A set $I\subset \R^d$ is called a \emph{box} if there exist intervals $(I_j)_{j=1}^d$ in $\R$  such that  $I=I_1\times \dots \times I_d$ (intervals may be open or closed, and they can be bounded or unbounded).
We say that a collection of sets is a \emph{ring} if it is closed under set differences and finite unions. We say that a ring is \emph{generated by} a collection of sets $\mathcal{C}$ if it is the smallest ring containing $\mathcal{C}$.
For a set $A\subset \R$ we denote $\diam(A):=\sup_{x,y\in A}\abs{x-y}$ with the convention $\diam(\emptyset)=0$.
Given a right-continuous increasing function $h:[0,\infty) \to [0,\infty]$ with $h(0)=0$, define
\[\nu^{*}(E):=\sup_{\delta >0} \nu_{\delta}^{*}(E),\]
where
\[\nu_\delta^{*}(E):=\inf\left\{\sum_{j=1}^{\infty} h(\diam(E_j)): E\subset \bigcup_{j\in \N} E_j,\; \diam(E_j)< \delta\right\}.\]
We say that the measure $\nu$ is the \emph{Hausdorff measure corresponding to $h$} if it is restriction of the outer measure $\nu^{*}(E)$ to measurable sets. For a detailed discussion of the properties of Hausdorff measures, we refer the reader to \cite{RogersBook}.

\section{Proofs}
The following lemma was mentioned in the introduction and will be used later. Since its proof relies on standard arguments from measure theory, we provide only a brief outline.
\begin{lemma}
\label{lem:simple_equality}
    Let $\mu$ be a translation invariant measure on $(\R^d,\mathcal{B}(\R^d))$. If there exists a set $U\subset \R^d$ that is either a box or an open set, such that $0<\mu(U)=\lambda(U)<\infty$, then $\mu = \lambda\vert_{\mathcal{B}(\R^d)}$.
\end{lemma}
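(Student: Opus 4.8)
The plan is to prove the equality $\mu = \lambda\vert_{\mathcal{B}(\R^d)}$ by a bootstrapping argument that propagates the hypothesis $\mu(U) = \lambda(U)$ from the single set $U$ to all boxes, then to open sets, and finally to all Borel sets via a $\pi$-$\lambda$ (Dynkin) or outer-regularity argument.

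\medskip

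\emph{Step 1: reduce to the case where $U$ is a box, in fact a cube.} If $U$ is open, write it as a countable disjoint union of boxes (or half-open dyadic cubes); since $\mu$ and $\lambda$ are countably additive and $\mu \le \lambda$ is not yet known, I would instead argue that $\mu(U) = \lambda(U) < \infty$ together with subadditivity forces $\mu$ to agree with $\lambda$ on each of the pieces — this needs the inequality $\mu \le \lambda$ on boxes first, so the genuinely correct order is: (a) show $\mu$ is finite on bounded sets / locally finite, (b) show $\mu \le \lambda$ on all boxes by a covering argument, (c) then use the equality on $U$ to upgrade $\le$ to $=$ on a cube, (d) then spread to all boxes by translation invariance and additivity, scaling through rational and then real side-lengths by monotone limits.

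\medskip

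\emph{Step 2: from one cube to all cubes and boxes.} Once $\mu(Q_0) = \lambda(Q_0)$ for one cube $Q_0$, subdivide $Q_0$ into $n^d$ congruent subcubes; translation invariance gives each subcube equal $\mu$-measure, and additivity forces each to have $\mu$-measure equal to its $\lambda$-measure. Letting $n \to \infty$ through a suitable sequence and using monotonicity and continuity from below/above, one gets $\mu(Q) = \lambda(Q)$ for every cube $Q$ with rational side length and rational corner coordinates, then all cubes by approximation, then all boxes by taking products / finite unions and differences, then all open sets by writing them as increasing countable unions of finite unions of boxes and using continuity from below.

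\medskip

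\emph{Step 3: from open sets to all Borel sets.} The collection of Borel sets on which $\mu$ and $\lambda$ agree contains all open sets and, because both measures are (on bounded sets) finite, it is closed under proper differences and increasing limits, hence is a $\lambda$-system; since open sets form a $\pi$-system generating $\mathcal{B}(\R^d)$, Dynkin's theorem gives $\mu = \lambda$ on all bounded Borel sets, and then on all Borel sets by $\sigma$-finiteness. The main obstacle is Step 1(b)–(c): without knowing a priori that $\mu \le \lambda$, one must extract this inequality purely from translation invariance plus the single equality $\mu(U) = \lambda(U)$, which is exactly where the openness (or box structure) of $U$ is used — an open set can be exhausted from inside by finite unions of dyadic cubes, and a box covered from outside, pinning $\mu$ between the inner and outer $\lambda$-content; for a general Borel $U$ this pinning fails, which is precisely the phenomenon Theorem~\ref{thm:main} exploits.
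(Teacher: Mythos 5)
Your plan is essentially sound and uses the same ingredients as the paper (dyadic cubes, translation invariance, subdivision, and the $\sigma$-finite uniqueness/Dynkin step), but it is organized differently, and the difference matters for how much work is left. You try to first extract the inequality $\mu\le\lambda$ on boxes from the single equality $\mu(U)=\lambda(U)$ via a quantitative pinning argument (count the generation-$k$ half-open dyadic cubes inside $U$ to bound the common $\mu$-measure $m_k$ of such a cube by $\mu(U)/N_k\approx 2^{-kd}$, then cover a box by such cubes), and only then upgrade to equality on one cube and spread. That route does work, but the pinning step --- which you yourself flag as ``the main obstacle'' --- is only gestured at, and it is exactly the part that needs careful writing (all half-open dyadic cubes of one generation are translates of each other, $m_k<\infty$ because $U$ contains some dyadic cube, $N_k2^{-kd}\to\lambda(U)$, boundary terms in the outer covering). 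The paper sidesteps this entirely with a cleaner trick: it only needs \emph{some} cube $I\subset U$ with $0<\mu(I)<\infty$ (which exists by countable subadditivity applied to a dyadic decomposition of $U$), sets $c:=\mu(I)/\lambda(I)$ without knowing anything about its size, shows by the same subdivision/double-counting and face-measure-zero argument that $\mu=c\lambda$ on all boxes and hence, by the uniqueness theorem for $\sigma$-finite measures, on all of $\mathcal{B}(\R^d)$, and only at the very end tests the identity on $U$ to force $c=1$. So your approach buys a direct comparison $\mu\le\lambda$ at the cost of a quantitative counting argument you still need to write out (and you should also say a word about the case where $U$ is a box rather than an open set, e.g.\ handling the faces of a closed box), while the paper's proportionality-constant device makes that comparison unnecessary; both are correct routes to the lemma.
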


\begin{proof}[Proof]
    Assume that $U\subset\R^d$ is an open set - the case when $U$ is a box will be proved along the same lines.
    Since every open set can be expressed as a countable union of dyadic cubes, there exists a cube $I\subset U$ such that $0<\mu(I)<\infty$. 
    Next, note that all of faces of $I$ must have $\mu$-measure equal to $0$. 
    Indeed, if that was not true, one could pick a countable disjoint union of translates of a given face that are subsets of $I$ to conclude that the measure of the box $I$ would be infinite.

    Denote $c:= \mu(I)/\lambda(I)$ and assume, because of the previous part and translation invariance, that $I=[0,a_1)\times\dots\times [0,a_d)$. 
    For any $d$-tuple $q=(q_1,\dots, q_d)\in \Q_+^d$, by double counting measure of boxes of equal size, one can easily verify that 
    $\mu(I_q) = c \lambda(I_q)$, where $I_q:=[0,q_1a_1)\times \dots \times [0,q_da_d)$.
    By monotonicity of measure one can extend this equality to $d$-tuples of positive real numbers, what is, using translation invariance, equivalent to $\mu(J)=c\lambda(J)$ for any box $J \subset \R^d$. Since $\lambda$ is $\sigma$-finite, and boxes generate $\mathcal{B}(\R^d)$, one can conclude that $\mu = c\lambda$ on $\mathcal{B}(\R)$ as in \cite[Theorem~1.14]{FollandBook}. Finally, testing the equality on $U$, one concludes that $c=1$, so $\mu = \lambda\vert_{\mathcal{B}(\R^d)}$.
\end{proof}

The following ring of sets is the key ingredient 
in the construction of the counterexample.
\begin{definition}
\label{def:ring}
    We define the ring $\mathcal{R}$ in the following way.
    Let $C$ be any fixed compact set that is nowhere dense in $\R^d$ and has positive Lebesgue measure (it is well known that such sets exist - they are called fat Cantor sets). Define $\mathcal{R}$ to be the ring generated by the following collection of sets:
    \[\mathcal{C}:=\left\{ (C + x )\cap I: x\in \R^d, \quad  I\subset\R^d \text{ is a box} \right\}.\]
\end{definition}
Since $\mathcal{B}(\R^d)$ is itself a ring containing $\mathcal{C}$, observe that $\mathcal{R}\subset \mathcal{B}(\R^d)$. 

\begin{lemma}
    \label{lem:closed}
        For any $R\in \mathcal{R}$ and any box $I\subset \R^d$ one has $R\cap I\in \mathcal{R}$.
    \end{lemma}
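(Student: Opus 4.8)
The plan is to apply the standard ``good sets'' argument. I would introduce the subcollection
\[
\mathcal{R}' := \bigl\{R \in \mathcal{R} : R \cap I \in \mathcal{R} \text{ for every box } I \subset \R^d\bigr\},
\]
and reduce the lemma to showing $\mathcal{R}' = \mathcal{R}$. Since $\mathcal{R}' \subseteq \mathcal{R}$ holds by construction, and $\mathcal{R}$ is by definition the smallest ring containing the family $\mathcal{C}$ of Definition \ref{def:ring}, it is enough to prove that $\mathcal{R}'$ is a ring and that $\mathcal{C} \subseteq \mathcal{R}'$.

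First I would check $\mathcal{C} \subseteq \mathcal{R}'$. Any element of $\mathcal{C}$ is of the form $S = (C+x) \cap J$ for some $x \in \R^d$ and some box $J$; for an arbitrary box $I$ we then have $S \cap I = (C+x) \cap (J \cap I)$. The key elementary observation is that $J \cap I$ is again a box, because the coordinatewise intersection of two intervals is an interval (with the empty set allowed as a degenerate interval/box). Hence $S \cap I \in \mathcal{C} \subseteq \mathcal{R}$, so $S \in \mathcal{R}'$.

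Next I would verify that $\mathcal{R}'$ is closed under finite unions and set differences. Given $R_1, R_2 \in \mathcal{R}'$ and a box $I$, the set-theoretic identities $(R_1 \cup R_2) \cap I = (R_1 \cap I) \cup (R_2 \cap I)$ and $(R_1 \setminus R_2) \cap I = (R_1 \cap I) \setminus (R_2 \cap I)$ exhibit the left-hand sides as a union, respectively a difference, of members of $\mathcal{R}$; since $\mathcal{R}$ is a ring these lie in $\mathcal{R}$, so $R_1 \cup R_2, R_1 \setminus R_2 \in \mathcal{R}'$. Thus $\mathcal{R}'$ is a ring containing $\mathcal{C}$, whence $\mathcal{R} \subseteq \mathcal{R}'$, and combined with $\mathcal{R}' \subseteq \mathcal{R}$ this gives $\mathcal{R} = \mathcal{R}'$, which is the assertion.

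I do not expect a genuine obstacle in this argument; the only point that needs a moment of care is the elementary claim that a finite intersection of boxes is a box (and, if one prefers not to count $\emptyset$ as a box, handling the empty case separately via the fact that $\emptyset \in \mathcal{R}$ because $\mathcal{R}$ is a ring).
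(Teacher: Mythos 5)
Your proposal is correct and is essentially the paper's own argument: the paper fixes a box $I$ and applies the same ``good sets'' minimality trick to $\mathcal{F}:=\{R\in\mathcal{R}: R\cap I\in\mathcal{R}\}$, checking $\mathcal{C}\subset\mathcal{F}$ (intersection of boxes is a box) and that $\mathcal{F}$ is a ring, exactly as you do with your $\mathcal{R}'$. The only cosmetic difference is that you quantify over all boxes inside the definition of $\mathcal{R}'$ rather than fixing $I$ first, which changes nothing of substance.
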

    \begin{proof}
        For arbitrary box $I\subset \R^d$ define $\mathcal{F}:=\{R\in \mathcal{R}: R\cap I \in \mathcal{R}\}$. Observe that $\mathcal{C} \subset \mathcal{F}$ and  $\mathcal{F}$ is a ring.
        Therefore, $\mathcal{R}\subset \mathcal{F}$ because $\mathcal{R}$ is the smallest family with that property.
    \end{proof}

    \begin{lemma}
    \label{lem:cover}
    Let $I\subset \R^d$ be a closed box with nonempty interior. Then $I$ cannot be covered by a countable union of elements of $\mathcal{R}$.
    \end{lemma}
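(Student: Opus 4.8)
The plan is to show that \emph{every} element of $\mathcal{R}$ is nowhere dense in $\R^d$, and then conclude by the Baire category theorem.

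First I would record the (standard) fact that the collection $\mathcal{N}$ of all nowhere dense subsets of $\R^d$ is a ring. It is closed under passing to subsets, since $\overline{A}\subset\overline{B}$ whenever $A\subset B$; in particular $A\setminus B\in\mathcal{N}$ whenever $A\in\mathcal{N}$. It is closed under finite unions because $\overline{A\cup B}=\overline{A}\cup\overline{B}$, and the union of two closed sets with empty interior again has empty interior (if an open set $V$ were contained in $F_1\cup F_2$ with $F_1,F_2$ closed, then $V\setminus F_1$ would be an open subset of $F_2$, forcing $V\subset F_1$). Next, each generator $(C+x)\cap I\in\mathcal{C}$ is contained in the translate $C+x$, which is closed and nowhere dense because $C$ is nowhere dense in $\R^d$; hence $\mathcal{C}\subset\mathcal{N}$. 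Since $\mathcal{R}$ is by definition the smallest ring containing $\mathcal{C}$, it follows that $\mathcal{R}\subset\mathcal{N}$, i.e. every element of $\mathcal{R}$ is nowhere dense.

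Now suppose, for contradiction, that $I\subset\bigcup_{n\in\N}R_n$ with each $R_n\in\mathcal{R}$. Then the nonempty open set $\operatorname{int}(I)$ is contained in $\bigcup_{n\in\N}R_n$, a countable union of nowhere dense sets, and is therefore meager. But $\R^d$ is a Baire space, so no nonempty open subset of $\R^d$ is meager. This contradiction proves the lemma.

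There is no deep obstacle here: the only points requiring a little care are the verification that $\mathcal{N}$ is closed under finite unions (which uses that the translate $C+x$ is closed, not merely that $C$ has empty interior), and the observation that although the statement concerns covering all of the closed box $I$, it suffices to derive a contradiction from covering its interior, which is legitimate precisely because $\operatorname{int}(I)\neq\emptyset$. One could alternatively prove the structural claim that every element of $\mathcal{R}$ is a finite union of subsets of translates of $C$, but routing the argument through the ring $\mathcal{N}$ of nowhere dense sets is cleaner and makes the use of the Baire category theorem transparent.
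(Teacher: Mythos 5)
Your proof is correct and follows essentially the same route as the paper: show that every element of $\mathcal{R}$ is nowhere dense, then invoke the Baire category theorem on the box with nonempty interior. The only difference is cosmetic --- the paper establishes nowhere-density through the explicit inductive construction $\mathcal{R}=\bigcup_{n\in\N}\mathcal{R}_n$ of the generated ring, whereas you observe that the nowhere dense sets themselves form a ring containing $\mathcal{C}$ and appeal to minimality of $\mathcal{R}$, which is a slightly cleaner packaging of the same fact.
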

        \begin{proof}
            We prove that every set in $\mathcal{R}$ is nowhere dense in $\R^d$ first.

            The idea is to use the following inductive construction of $\mathcal{R}$.
            Let $\mathcal{R}_1:=\mathcal{C}$ and let
            \[\mathcal{R}_{n+1} := \{ A\cup B,\, A\setminus B :  A,B\in \mathcal{R}_n\}.\]
            
            It is known that $\mathcal{R}=\cup_{n\in \N}\mathcal{R}_n$, but we include a short proof for completeness. By induction and definition of a ring, for all $n\in \N$ it follows that $\mathcal{R}_n\subset \mathcal{R}$, so $\tilde{\mathcal{R}}:=\cup_{n\in\N}R_n\subset \mathcal{R}$.
            We now prove that $\tilde{\mathcal{R}}$ is a ring. 
            If $A,B \in \tilde{\mathcal{R}}$, since the family $(\mathcal{R}_n)_{n\in \N}$ is increasing, there exists $n\in\N$ such that $A,B\in \mathcal{R}_n$. Therefore, $A\setminus B$ and $A\cup B$ are in $\mathcal{R}_{n+1}\subset \tilde{\mathcal{R}}$ by the definition. Therefore, $\tilde{\mathcal{R}}$ is a ring that contains $\mathcal{C}$, so $\mathcal{R}\subset \tilde{\mathcal{R}}$.

            We now prove by induction that for every $n\in\N$, every set in $\mathcal{R}_n$ is nowhere dense in $\R^d$. The $n=1$ case is satisfied by the definition of the set $C$. For the induction step we use the fact that union and difference of two nowhere dense sets is again a nowhere dense set. Therefore, using $\mathcal{R}=\cup_{n\in\N}\mathcal{R}_n$, we conclude that every set in $\mathcal{R}$ is nowhere dense in $\R^d$.

            We turn to the proof of the lemma. Suppose that there exists a box $I\subset \R^d$ with nonempty interior and a family $\{R_n\}_{n\in \N}\subset \mathcal{R}$ such that 
            $I\subset \cup_{n\in \N}{R_n}$. 
            Since $I$ has nonempty interior and since each set $R_n$ is nowhere dense in $\R^d$, this would imply that the box $I$ is equal to the countable union of nowhere dense sets, $R_n\cap I$, in $I$, thus contradicting the Baire category theorem. Therefore, the statement of the lemma holds. 
    \end{proof}

We are ready to prove the main theorem. The construction of the measure will closely follow the standard construction of a measure from an outer measure that can be found, for example, in \cite[\S 1]{FollandBook}. However, due to the intentionally nonstandard properties of the ring $\mathcal{R}$, certain subtle modifications are necessary, which we will emphasize during the construction.
\begin{proof}[Proof of Theorem \ref{thm:main}]    
    We define a pre-measure $\mu_0:\mathcal{R}\to [0,\infty]$ by $\mu_0(R):=\lambda(R)$, where $\lambda$ is the Lebesgue measure on $\R^d$. 
    The fact that $\mu_0$ is a pre-measure on $\mathcal{R}$ follows from $\lambda$ being a measure defined on $\mathcal{B}(\R^d)\supset \mathcal{R}$.
    We define the outer measure of an arbitrary set $E\subset \R^d$ with
    \[\mu^*(E):= \inf\left\{\sum_{j=1}^{\infty} \mu_0(R_j): E\subset \bigcup_{j\in\N}R_j, \quad  R_j\in \mathcal{R} \right\}.\]
    Observe that a set $E$ does not necessarily have a countable cover by elements of $\mathcal{R}$. In that case, by a standard convention $\inf \emptyset =+\infty$, one has $\mu^*(E)=+\infty$. 
    
    By a standard argument, as in \cite[Proposition~1.10]{FollandBook}, it follows that $\mu^{*}$ is indeed an outer measure. 
    The necessary inequality $\mu^{*}(\cup_j A_j)\leq \sum_j \mu^{*}(A_j)$ follows trivially in case any of the sets does not have a countable cover by elements of $\mathcal{R}$.

    The set $\mathcal{M}$ of $\mu^{*}$-measurable sets $A\subset\R^d$ satisfying $\forall E\subset \R^d$
    \begin{equation}
    \label{eq:measurability}
        \mu^*(E)= \mu^*(E\cap A)+\mu^*(E\cap A^c).
    \end{equation}
    forms a $\sigma$-algebra by Caratheodory's theorem \cite[Theorem~1.11]{FollandBook}.
    
    Furthermore, by \cite[Proposition~1.13]{FollandBook} (with a trivial modification, namely that the required inequality in part (b) holds trivially when the set $E$ does not have a cover by elements of $\mathcal{R}$) it follows that every element of $\mathcal{R}$ is $\mu^*$-measurable and 
    \begin{equation*}
        \mu^{*}\vert_{\mathcal{R}}=\mu_0 = \lambda\vert_{\mathcal{R}}.
    \end{equation*}
    Specifically, this implies 
    $\mu(C)=\lambda(C)\in (0,\infty)$, 
    so one can take $K:=C$ in the statement of the theorem.

    We now show that $\mathcal{B}(\R^d)\subset \mathcal{M}$. 
    Let $a\in \R$ be arbitrary, and let $A$ be a half-space, i.e. a box of the form $A:=I_1\times \dots\times I_d $, where $I_j=[a,\infty)$ for exactly one $j\in \{1,\dots, d\}$ and $I_k=\R$ for all $k\neq j$. Inequality $\leq$ in \eqref{eq:measurability} follows from subadditivity of outer measure, so we prove the reverse inequality.

    Let $E\subset \R^d$ be arbitrary. If there is no cover of $E$, the left hand side of \eqref{eq:measurability} is equal to $+\infty$, so the inequality holds. Otherwise, for $\eps>0$ arbitrary, choose any $\{R_j\}_{j\in\N}\subset \mathcal{R}$ such that $E\subset\cup_{j\in\N}R_j$ and
    \[\sum_{j=1}^{\infty}\mu_0(R_j)\leq \mu^*(E)+\eps.\]
    Since $A^c$ is again a box, Lemma \ref{lem:closed} implies that both $R_j\cap A$ and $R_j\cap A^c$ are elements of $\mathcal{R}$. Now, the previous inequality, the fact that $\mu_0$ is a pre-measure on $\mathcal{R}$ and the definition of the outer measure imply
    \[\mu^*(E)+\eps \ge \sum_{j=1}^{\infty}\mu_0(R_j) = \sum_{j=1}^{\infty}\left(\mu_0(R_j\cap A) + \mu_0(R_j\cap A^c)\right) \ge \mu^{*}(E\cap A) + \mu^{*}(E\cap A^c).\]
    Since $\eps>0$ was arbitrary, this implies the $\geq$ inequality in \eqref{eq:measurability}. Therefore, $A\in \mathcal{M}$ and consequently $\mathcal{B}(\R^d)\subset \mathcal{M}$, since the half-spaces generate $\mathcal{B}(\R^d)$.
    
    Finally, Lemma \ref{lem:cover} implies that there is no countable cover by elements of $\mathcal{R}$ of any box with nonempty interior, so specifically $\mu([0,1]^d)=+\infty$. Therefore, $\mu \neq \lambda$.
\end{proof}

The following lemma is needed for the proof of Corollary \ref{cor:Hausd}.
\begin{lemma}
\label{lem:cubes}
    Let $\mathcal{Q}=(Q_j)_{j=1}^{n}$ be a family of cubes $Q_j=[0,a_j]^d$ such that $\sum_{j=1}^{n}\lambda(Q_j) \ge 1$. Then, the cube $[0,1/2]^d$ can be covered by translates of cubes from $\mathcal{Q}$.
\end{lemma}
\begin{proof}
    For each $j=1,2,\dots, n$ there exists $k_j\in \Z$ such that $a_j \in [2^{k_j},2^{k_j+1})$. We define another family of cubes $\mathcal{S}:=(S_j)_{j=1}^{n}$, where $S_j=[0,2^{k_j}]^d$. Since $Q_j \subset 2 S_j$, the following estimate holds
    \[\sum_{j=1}^{n}\lambda(S_j) = 2^{-d}\sum_{j=1}^{n}\lambda(2S_j)\ge 2^{-d}\sum_{j=1}^{n}\lambda(Q_j) \ge 2^{-d}.\]
    
    We now apply the following algorithm to the family $\mathcal{S}$. If there exists $k\in \Z$ such that at least $2^d$ cubes in $\mathcal{S}$ have a sidelength $2^k$, arrange them into a cube of sidelength $2^{k+1}$ and replace $2^d$ cubes of sidelength $2^{k}$ with the larger cube. If no such $k$ exists, terminate the algorithm. 
    Since the number of cubes in the family strictly decreases at each step, the algorithm terminates after a finite number of steps. Let the final family of cubes be denoted by $\mathcal{T}=(T_j)_{j=1}^{m}$. The total measure of the cubes remains unchanged, so $\sum_{j=1}^{m}\lambda(T_j)\ge 2^{-d}$,
    and for each $k\in\Z$ there exist at most $2^{d}-1$ cubes in $\mathcal{T}$ with a sidelength equal to $2^k$.
    We claim that the family $\mathcal{T}$ contains a cube with a sidelength at least $2^{-1}$. 
    Indeed, supposing that all cubes in $\mathcal{T}$ have sidelengths strictly less than $2^{-1}$, using the bound for the number of cubes of each given sidelength, this would imply
    \[2^{-d}\leq \sum_{j=1}^{m} \lambda(T_j) < \sum_{k=2}^{\infty} (2^{d}-1) 2^{-kd} = 2^{-d},\]
    leading to a contradiction.
    Finally, since each cube in $\mathcal{T}$ is formed by arranging cubes from $\mathcal{S}$ into a larger cube, and since $S_j\subset Q_j$, this implies the statement of the lemma.
\end{proof}

\begin{proof}[Proof of Corollary \ref{cor:Hausd}]
Suppose that $\nu$ is a Hausdorff measure for which there exists compact $K\in \mathcal{B}(\R^d)$ such that $\nu(K)=\lambda(K)=a\in(0,\infty)$. We claim that $\nu=\lambda$.

We first prove that there exists a cube $Q\subset \R^d$ such that $0<\nu(Q)<\infty$.

Let $\delta > 0$ be arbitrary. From the definition of $\nu_{\delta}^{*}(K)$, we know that there exists a sequence of sets $(E_j)_{j=1}^{\infty}$ such that $K\subset \bigcup_{j\in \N}E_j$, $\operatorname{diam} E_j<\delta$ for all $j\in \N$, and $\sum_{j=1}^{\infty} h(\operatorname{diam} E_j)\leq a + 1$.
Since every set $E\subset \R^d$ is contained in a closed ball of radius $\operatorname{diam} E$ centered at any point of $E$, the following inequality holds: $\lambda(E)\leq C_d(\operatorname{diam} E)^d $. Combining this with the fact that $K\subset \cup_{j\in \N} E_j$, we have:
\[a=\lambda(K)\leq \sum_{j=1}^{\infty}\lambda(E_j) \leq C_d\sum_{j=1}^{\infty}(\operatorname{diam} E_j)^d.\] 
Therefore, we can choose $n\in \N$ large enough so that
\[a/2 < C_d\sum_{j=1}^{n}(\operatorname{diam} E_j)^d.\]
Defining $Q_j:=[0, \operatorname{diam}E_j/\sqrt{d}]^d$, we observe that $\operatorname{diam}Q_j=\operatorname{diam} E_j<\delta$ and 
\[\sum_{j=1}^{n} \lambda(Q_j) = \sum_{j=1}^{n}(\operatorname{diam} E_j/\sqrt{d})^d \ge C_d^{-1}d^{-d/2}a/2 =:\alpha_{a,d}^d.\]
The rescaled version of Lemma \ref{lem:cubes} implies that translates of $(Q_j)_{j=1}^{n}$ can cover a cube $Q:=[0,\alpha_{a,d}/2]^d$. Hence, from the definition of $\nu^{*}(Q)$ and the sets $E_j$, we obtain:
\[\nu_\delta^{*}(Q)\leq \sum_{j=1}^{\infty}h(\operatorname{diam}Q_j) = \sum_{j=1}^{\infty}h(\operatorname{diam}E_j)\leq a+1.\]
Since $\delta>0$ was arbitrary, we conclude that $\nu(Q)=\sup_{\delta >0}\nu_\delta^{*}(Q)\leq a+1<\infty$. 

On the other hand, since $0<\nu(K)\leq \nu(\tilde{Q})$ for some cube $\tilde{Q}\subset \R^d$ large enough to contain $K$, and since $\tilde{Q}$ can be covered by a finite number $N$ of translates of the cube $Q$, it follows that $\nu(Q)\ge \nu(\tilde{Q})/N>0$. Therefore, $0<\nu(Q)<\infty$.

Defining $c:=\lambda(Q)/\nu(Q)$, Lemma \ref{lem:simple_equality} implies $c \nu=\lambda$ on $\mathcal{B}(\R^d)$. However, testing the equality on $K$, one needs to have $c=1$, so $\nu=\lambda$. Therefore, the measure $\mu$ from Theorem \ref{thm:main} cannot be a Hausdorff measure.

Finally, we prove the statement for the range of the measure $\mu$. For $v_1>0$ large enough, denoting $v=(v_1,0,\dots, 0)\in \R^d$, the set $S=\bigcup_{n\in \N}\{C+n\cdot v\}$ is a disjoint union of translates of $C$ because $C$ is compact. Therefore, using the fact that $\mu(C)>0$, it follows that $S$ has infinite measure. Finally, observing that $x\mapsto \mu(S\cap ((-\infty, x]\times \R^{d-1}))$ is a continuous function, the statement follows from the intermediate value property.    
\end{proof}

\section{Remarks on the method of proof}
\begin{remark}
\label{rem}
    It may seem more natural to define $\mathcal{R}$ simply as the ring generated by $\{C+x,\; x\in \R^d\}$. However, with this definition, we were not able to prove that boxes are measurable, so the $\sigma$-algebra of measurable sets with pre-measure defined on such ring might not contain $\mathcal{B}(\R^d)$ and is probably strictly smaller. On the other hand, if one tries to define the pre-measure on a ring larger than $\mathcal{R}$, one needs to be very careful not to define a measure that is equal to the Lebesgue measure.
\end{remark}
\begin{remark}
    We emphasize that it is necessary to use the full power of Caratheodory's theorem \cite[Theorem 1.11]{FollandBook} which guarantees the extension of the pre-measure defined on $\mathcal{R}$ to the $\sigma$-algebra of all $\mu^{*}$-measurable sets, rather than just the $\sigma$-algebra $\sigma(\mathcal{R})$. This is because we cannot prove that the latter contains $\mathcal{B}(\R^d)$. However, unlike the approach in \cite{FollandBook}, we decided to work with the pre-measure $\mu_0$ defined on the ring $\mathcal{R}$ rather than an algebra containing $\mathcal{C}$. This choice simplifies the statement of Lemma \ref{lem:cover}. Specifically, if we had used the algebra instead, we would have needed to consider the case in which the cover of the box $I$ contains a set of the form $(C+x)^c$, which is not a nowhere dense set.
\end{remark}


\section*{Acknowledgements}
Author is grateful to Patrick Pavić and Krešimir Nežmah for raising the main question of the note during the course in Measure theory at University of Zagreb. Author is also grateful to Ren\'{e} Schilling for useful comments that improved the exposition.

This work was supported in part by the Croatian Science Foundation under the project
HRZZ-IP-2022-10-5116 (FANAP).


\bibliography{bibliography}{}
\bibliographystyle{plain}

\end{document}